\numberwithin{equation}{section}
\numberwithin{figure}{section}
\theoremstyle{plain}
\newtheorem{thm}{Theorem}
\theoremstyle{plain}
\newtheorem{lem}{Lemma}
 \theoremstyle{remark}
 \newtheorem{rem}[thm]{Remark}
\theoremstyle{definition}
\newtheorem*{defn*}{Definition}
 \theoremstyle{remark}
 \newtheorem*{rem*}{Remark}
 \theoremstyle{plain}
 \newtheorem{cor}[thm]{Corollary}
 \theoremstyle{plain}
\theoremstyle{definition}
 \theoremstyle{definition}
 \newtheorem{defn}[thm]{Definition}
 \theoremstyle{remark}
 \theoremstyle{definition}
\theoremstyle{definition}
 \newtheorem{lemma}[thm]{Lemma}
\theoremstyle{definition}
 \newtheorem{assumption}[thm]{Assumption}
\title [BRWs in IID environments]{Survival asymptotics for branching random
walks in IID environments\footnote{Date: \today}}
\author{J\'anos Engl\"ander and Yuval Peres}
\address{Department of Mathematics, University of Colorado
Boulder, Colorado 80309-0395, USA}\email{janos.englander@colorado.edu}
\address{Microsoft Research, One Microsoft Way
Redmond, WA 98052, USA}
\email{peres@microsoft.com}
\keywords{Branching random walk, catalytic branching, obstacles, critical
branching, subcritical branching, random environment, spine, leftmost particle,
change of measure, optimal survival strategy.}
\begin{document}
\maketitle

\begin{abstract} 
We first study a model, introduced recently in  \cite{ES}, of a critical 
branching random walk in an IID random environment on the $d$-dimensional integer lattice. The walker performs critical (0-2) branching at a lattice point if and only if there is no `obstacle' placed there. The obstacles appear at each site with probability $p\in [0,1)$ independently of each other. We also consider a similar model, where the offspring distribution is subcritical.

Let $S_n$ be the event of survival up to time $n$. We show that on a set  of full $\mathbb P_p$-measure, as $n\to\infty$, 

{\bf (i) Critical case:}
 
\begin{equation*}
P^{\omega}(S_n)\sim\frac{2}{qn};
\end{equation*}

\medskip
{\bf (ii) Subcritical case:}

\begin{equation*}
P^{\omega}(S_n)= \exp\left[\left(
-C_{d,q}\cdot \frac{n}{(\log n)^{2/d}} 
\right)(1+o(1))\right],
\end{equation*}
 where $C_{d,q}>0$ does not depend on the branching law.

Hence, the model exhibits `self-averaging' in the critical case but not in the subcritical one. I.e., in (i) the asymptotic tail behavior is the same as in a `toy model' where space is removed, while in (ii) the spatial survival probability is larger than in the corresponding toy model, suggesting spatial strategies.

We utilize a spine decomposition of the branching process as well as some known results on random walks.
\end{abstract}

\maketitle


\section{Introduction}

\subsection{Model}

We first consider a model, introduced recently in  \cite{ES}, of a critical branching random walk $Z=\{Z_n\}_{n\ge 0}$ in an IID random environment on the $d$-dimensional integer lattice as follows.
The environment is determined by placing \textit{obstacles} on each site, with probability $0\le p<1$, independently of each other.
Given an environment, the initial single particle, located at the origin at $n=0$,
first moves according to a nearest neighbor simple random walk, and
immediately afterwards, the following happens to it (see Fig. 1.1):
\begin{enumerate}
\item If there is no obstacle at the new location (we call it then a {\it vacant site}), 
the particle either vanishes
or splits into two offspring particles, with equal probabilities.
\item If there is an obstacle at the new location, nothing happens to the particle.
\end{enumerate}
The new generation then follows the same rule in the next unit time
interval and produces the third generation, etc.

\begin{figure}[h]
\centering
\centerline{\includegraphics[width=.65\paperwidth]{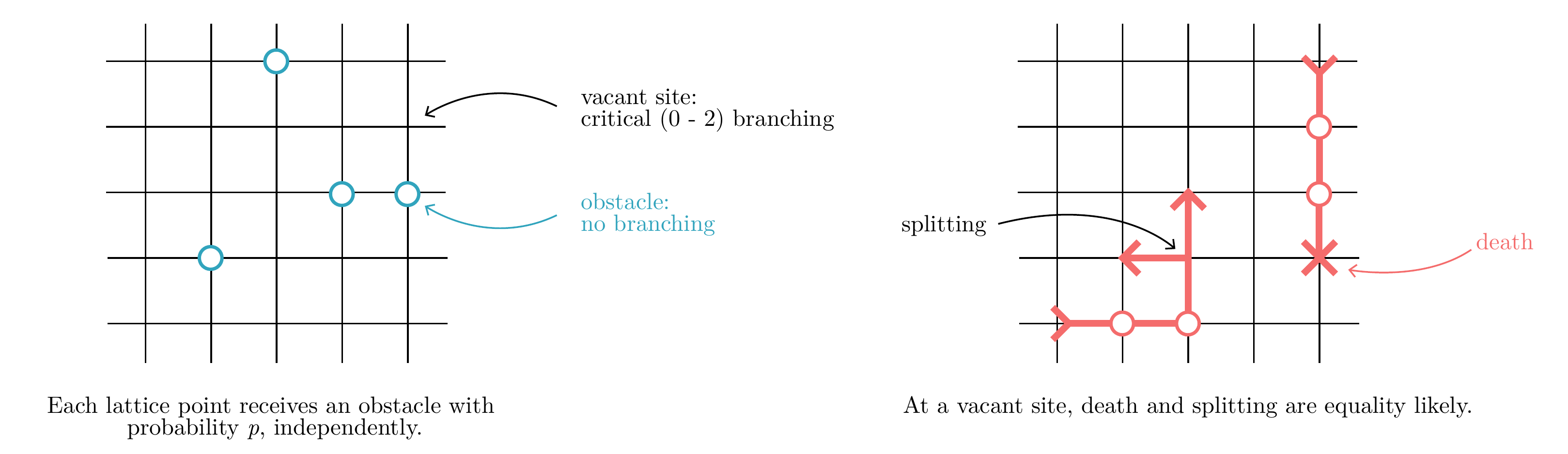}}
\caption{}
\end{figure}

We will also consider the same model when critical branching is replaced by a subcritical one, with mean $\mu<1$. In this latter case we will make the following standard assumption. 

\begin{assumption}[$L\log L$ condition]
In the subcritical case let $L$ denote the random number of offspring, with law $\mathcal{L}$. We assume that  $$\sum_{k=1}^{\infty}{\sf Prob}(L=k)k \log k<\infty.$$
\end{assumption}

Let $p\in[0,1)$. In the sequel, $K=K(\omega)$ will denote the set of lattice points 
with obstacles, $\mathbb{P}_{p}$ will denote the
law of the obstacles and $P^{\omega}$ will denote the law of the BRW
given the environment $\omega\in \Omega$. (Here $\Omega$ may be identified with $\{0,1\}^{\mathbb {Z}^{d}}$.)  Define also $\mathbf{P}_p:=\mathbb{E}_p\otimes{P}^{\omega}$.
We will say that a statement holds `on a set of full $\mathbb{P}_p$-measure,' when it holds under $P^{\omega}$ for $\omega\in\Omega'\subset \Omega$ and $\mathbb{P}_p(\Omega')=1$.

Finally,  $\mathbb{I}_A$ will denote the indicator of the set $A$, and 
for $f,g: (0,\infty)\to (0,\infty)$, the notation $f\sim g$ will mean that $\lim_{t\to\infty}\frac{f(t)}{g(t)}=1$.

\subsection{Quenched survival; main result}

We are interested in the asymptotic behavior, as time tends to infinity,
of the probability that there are surviving particles, and on its possible
dependence on the parameters. 
(Note that in the 
extreme case, when $p=0$, the asymptotic behavior is well known.)

We consider the quenched case, and so, we  can only talk about the \textit{almost
sure} asymptotics, as the probability $P^{\omega}$ itself depends
on the realization of the environment.

Let $S_{n}$ denote the event of survival up to $n\ge0$. That
is, $S_{n}=\{|Z_{n}|\ge1\}$, where $|Z_{n}|$ is the population size
at time $n$. Our  main result will concern the a.s. asymptotic behavior of $P^{\omega}(S_n)$.
\begin{thm}[Quenched  survival probability]\label{main.thm}
Let $d\ge 1$ and $p\in (0,1),$ and recall that $q:=1-p$. Then the following holds on a set  of full $\mathbb P_p$-measure, as $n\to\infty$. 

{\bf (i) Critical case:}
 
\begin{equation}
P^{\omega}(S_n)\sim\frac{2}{qn}\label{quenched.survival.asymptotics};
\end{equation}

\medskip
{\bf (ii) Subcritical case:}

\begin{equation}\label{subcrit.quenched.survival.asymptotics}
P^{\omega}(S_n)= \exp\left[\left(
-C_{d,q}\cdot \frac{n}{(\log n)^{2/d}} 
\right)(1+o(1))\right],
\end{equation}
 where $C_{d,q}$ is a positive constant that does not depend on the branching law.
\end{thm}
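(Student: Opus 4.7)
The plan is to handle the two cases by rather different techniques. For the \emph{critical case} the starting observation is that $E^\omega[|Z_n|]\equiv 1$: the mean offspring is $1$ at vacant sites (by the symmetric $(0,2)$-rule) and trivially $1$ at obstacles, so $(|Z_n|)_{n\ge 0}$ is a nonnegative $P^\omega$-martingale. Size-biasing by $|Z_n|$ produces the spine measure $\hat P^\omega_n$, under which a distinguished ``spine'' particle performs an ordinary simple random walk $(X_k)_{k\ge 0}$ from the origin; at each time $k\le n$ with $X_k\notin K$ the spine splits into two children (one continuing the spine, the other rooting an independent critical BRW in the same environment $\omega$), while at $X_k\in K$ the spine simply moves. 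From $E^\omega[|Z_n|]=1$ one obtains the basic identity
\begin{equation*}
P^\omega(S_n)=\hat E^\omega_n\!\left[\frac{1}{|Z_n|}\right].
\end{equation*}
The ergodic theorem applied to the i.i.d.\ environment as seen along the spine gives $V_n:=\#\{1\le k\le n:X_k\notin K\}\sim qn$ on a set of full $\mathbb{P}_p$-measure, so the clock of effective branchings grows like $qn$. Feeding this into the Kolmogorov/Yaglom theorem for critical branching processes in varying environment --- whose cumulative offspring variance along the spine is precisely $V_n$ --- yields $\hat E^\omega_n[1/|Z_n|]\sim 2/(qn)$, establishing \eqref{quenched.survival.asymptotics}.

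For the \emph{upper bound in the subcritical case} I would use the first-moment estimate
\begin{equation*}
P^\omega(S_n)\le E^\omega[|Z_n|]=E_0^{\mathrm{SRW}}\bigl[\mu^{V_n}\bigr],
\end{equation*}
which follows by iterating the one-step mean ($\mu$ at vacant sites, $1$ at obstacles) along the walk. The right-hand side is exactly the quenched survival functional for simple random walk in a (soft) Bernoulli potential, and Sznitman--Antal-type quenched Donsker--Varadhan asymptotics on $\mathbb{Z}^d$ give $\log E^\omega[|Z_n|]\sim -C_{d,q}\,n/(\log n)^{2/d}$ on a set of full $\mathbb{P}_p$-measure. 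Crucially, the leading constant depends only on $d$ and $q$ and \emph{not} on $|\log\mu|$, because the optimal walk strategy --- confinement inside a mesoscopic all-obstacle ball of radius $\asymp(\log n)^{1/d}$ --- avoids the soft killing altogether.

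For the matching \emph{lower bound} I would use a second-moment/Paley--Zygmund argument. Paley--Zygmund gives
\begin{equation*}
P^\omega(S_n)\ge \frac{(E^\omega[|Z_n|])^2}{E^\omega[|Z_n|^2]},
\end{equation*}
so it suffices to prove $E^\omega[|Z_n|^2]/E^\omega[|Z_n|]=\exp(o(n/(\log n)^{2/d}))$. The second moment decomposes via the branching genealogy (a many-to-two identity) into the first moment plus a sum over coalescence generations, each summand being a product of first-moment quantities at random starting positions in the environment, which can be controlled using the same Donsker--Varadhan-type bounds together with a mild uniformity in the starting point. Combined with the upper bound this yields \eqref{subcrit.quenched.survival.asymptotics} with the matching constant $C_{d,q}$; its independence from the branching law emerges because in both directions the asymptotic cost is entirely geometric --- dictated by confining a random walker to an obstacle pocket rather than by the branching dynamics.

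The most delicate step, which I expect to be the main obstacle, is this second-moment control in the subcritical lower bound: one must (a) unfold the second moment into conditional first moments in the quenched environment, (b) invoke a uniform-in-starting-point version of the Sznitman--Antal asymptotic, and (c) verify that the accumulated corrections contribute only a factor $\exp(o(n/(\log n)^{2/d}))$, not spoiling the leading exponent. A secondary technical point, common to quenched random-environment asymptotics, is upgrading almost-sure convergence along sparse subsequences $n_j\to\infty$ to almost-sure convergence in $n$, via interpolation and monotonicity.
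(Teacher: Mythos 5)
Your critical-case sketch is structurally close to the paper's: both size-bias by $|Z_n|$, track the fraction of time the spine spends at vacant sites, and conclude that the branching ``clock'' runs at rate $q$. Where the paper does real work, however, you invoke a black box. The identity $P^\omega(S_n)=\widehat E^\omega[1/|Z_n|]$ and the heuristic ``cumulative variance along the spine is $\sim qn$, so plug into Kolmogorov/Yaglom'' is precisely the statement that needs proof in a quenched varying-environment setting; the paper's Section~4 \emph{is} the proof of that Yaglom-type estimate, via the leftmost-particle (i.e.\ $A_n$) conditioning and the decomposition $|Z_n|=1+\sum_{j\in J_n}Z_{n,j}$ into spine plus bushes. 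Also, your assertion that the ``ergodic theorem'' yields $V_n\sim qn$ almost surely along the spine is a quenched law of large numbers for a random walk in random scenery; it is true, but it is not an off-the-shelf i.i.d.\ ergodic theorem (the scenery values seen along a recurrent walk are far from independent), and the paper gets the control it needs through the annealed RWRS large-deviation bound of Gantert--K\"onig--Shi upgraded to quenched via Markov plus Borel--Cantelli. So for part~(i) you have the right architecture but have outsourced exactly the two lemmas that constitute the argument.

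For part~(ii) the upper bound coincides with the paper's (first moment, Many-to-One, Donsker--Varadhan/Antal asymptotics with constant depending only on $d,q$). The lower bound is where your proposal genuinely breaks. The sufficient condition you state for Paley--Zygmund, namely $E^\omega|Z_n|^2/E^\omega|Z_n|=\exp\bigl(o(n/(\log n)^{2/d})\bigr)$, is false. By the many-to-two decomposition, $E^\omega|Z_n|^2=E^\omega|Z_n|+\Theta\bigl(\sum_{s<n}\mathtt{DV}_q(s)\bigr)$; since $\mathtt{DV}_q(s)$ is stretched-exponentially small and summable, the second term is $\Theta(1)$ for a.e.\ $\omega$, hence it dominates $E^\omega|Z_n|=\mathtt{DV}_q(n)\to 0$. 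Consequently $E^\omega|Z_n|^2/E^\omega|Z_n|=\exp\bigl(+C_{d,q}\,n/(\log n)^{2/d}(1+o(1))\bigr)$, the \emph{opposite} of small, and plain Paley--Zygmund gives $P^\omega(S_n)\gtrsim (E^\omega|Z_n|)^2\approx\exp\bigl(-2C_{d,q}\,n/(\log n)^{2/d}\bigr)$, off by a factor of $2$ in the exponent. To salvage a second-moment approach you would have to restrict both moments to a confinement event (walker pinned in a mesoscopic obstacle pocket), which amounts to re-doing the hard direction of the Donsker--Varadhan lower bound inside the branching structure. The paper avoids all of this with a short coupling: since $p_0>0$, one can embed a single random walk with soft killing probability $p_0$ at vacant sites into the BRW (always follow a surviving child when one exists), giving $P^\omega(S_n)\ge\mathtt{DV}_q^{\mu^*}(n)$ with $\mu^*=1-p_0$, and then the $\mu$-independence of $C_{d,q}$ closes the sandwich. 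You should replace the Paley--Zygmund step with this coupling, or at minimum repair the second-moment computation by localizing to the optimal pocket.
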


\subsection{Motivation; heuristic interpretation}
Consider first the case of critical branching.
Recall  the classic result due to Kolmogorov \cite[Formula 10.8]{H},
that for critical unit time branching with generating function $\varphi$,
as $n\to\infty$, 
\begin{equation}\label{ANK}
P(\text{survival up to}\ n)\sim\frac{2}{n\varphi''(1)}.
\end{equation}

As a particular case, let us consider now a non-spatial toy model
as follows. Suppose that branching occurs with probability $q\in(0,1)$,
and then it is critical binary, that is, consider the generating function
\[
\varphi(z)=(1-q)z+\frac{1}{2}q(1+z^{2}).
\]
 It then follows that, as $n\to\infty$, 
\begin{equation}
P(\text{survival up to}\ n)\sim\frac{2}{qn}.\label{non-spatial}
\end{equation}

Turning back to our spatial model (with critical branching),  simulations suggested (see \cite{ES}) the \textit{self averaging} property of the model:  the asymptotics for the annealed and the quenched case are the same. In fact, this asymptotics is \textit{the same as the one in (\ref{non-spatial})}, where $p=1-q$ is the probability that a site has a obstacle. In other words, despite our model being spatial, in an asymptotic sense, the parameter $q$ simply plays the role of the branching probability of the above non-spatial toy model. To put it yet another way, $q$ only introduces a `time-change.'

In the present paper we would like to establish rigorous results concerning survival.

Our main result will demonstrate that while for critical branching, self-averaging indeed holds, this is not the case for subcritical branching.

For further motivation in mathematics and in mathematical biology, see \cite{ES}. For topics related to the quenched and annealed survival of a single particle among obstacles in a continuous setting, see the fundamental monograph \cite{Sz}. Finally, we mention the excellent current monograph \cite{S15} on branching random walks, which also includes the spine method relevant to this paper.

\medskip
Next, we give a heuristic interpretation of Theorem \ref{main.thm}.

\medskip
\underline{(i) Critical case:}
The intuitive picture behind the asymptotics   is that \textit{there
is nothing the BRW could do to increase
the chance of survival,} at least as far as the leading order term
is concerned (as opposed to well known models, for example when a
single Brownian motion is placed into random medium \cite{Sz}). Hence,
given any environment, the particles move freely and experience branching
at $q$ proportion of the time elapsed, and the asymptotics
agrees with the one obtained in the non-spatial setting as in (\ref{non-spatial}).

Note that whenever the total population size reduces to one, the probability
of that particle staying in the region of obstacles is known to be much less than $\mathcal{O}(1/n)$. So the optimal strategy for this particle to survive is obviously not to try to stay completely in that region and thus avoid branching. Rather, survival will mostly be possible because of the potentially large family tree stemming from that particle.

 Since $Z$ is a $P^{\omega}$-martingale for any $\omega\in\Omega$,
$E^{\omega}(|Z_n|)=1$ for $n\ge 1$, and so 
\begin{equation}\label{reciprocal}
1={E}^{\omega}(|Z_{n}|\mid S_{n})P^{\omega}(S_n).
\end{equation}
In fact, we suspect that
on a set of full $\mathbb{P}_p$-measure, under ${P}^{\omega}(\cdot\mid S_n)$, the law of
$\frac{|Z_{n}|}{n}$ converges to  the exponential distribution with mean $q/2$. (Cf. Theorem C(ii) in \cite{LPP95}.)

\medskip
\underline{(ii) Subcritical case:}
Now the situation is very different. In this case spatial strategy, that is, the avoidance of vacant sites, does make sense, since those sites are now `more lethal.'
Unlike in the critical case, the result now differs from what the non-spatial toy model would give us, namely, in that case, by the previously mentioned result of Heathcote, Seneta and Vere-Jones (Theorem B in \cite{LPP95}),
$$\exists\lim_{n\to\infty}\frac{{\sf Prob}(S_{n})}{\mu^n}>0,$$ under the LlogL condition.
In our spatial setting, the survival probability has thus improved!

Finally, we note that in \cite{ES}, in the annealed case, the second-order survival asymptotics has also been observed through simulations.  Those simulation results\footnote{Simulation has indicated \cite{ES} that $\mathbf{P}_p(S_n)=\frac{2}{qn}+f(n)$, with $n^{2/3}f(n)$ tending to a positive constant.} suggest that spatial survival strategies do exists, which are not detectable at the logarithmic scale but are visible at the second-order level.

\section{Some preliminary results}

In this section we present two simple statements concerning our branching
random walk model which were proven in \cite{ES}, and also some a priori bounds.

\begin{lem}[Monotonicity (Theorem 2.1  in \cite{ES}) and its proof]
\label{monotonicity} Let $0\le p<\widehat{p}\le1$ and fix $n\ge0$.
Then 
\[
{\mathbf{P}}_{p}(S_{n})\le{\mathbf{P}}_{\widehat{p}}(S_{n}).
\]
Also, for any $\omega\in \Omega$ and $n\ge 1$, one has $P^{\omega}(S_n)\ge P^{*}(S_n)$, where $P^{*}$ 
corresponds to the $p=0$ case.
 \end{lem}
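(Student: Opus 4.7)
The plan is to reduce both claims to a single quenched monotonicity statement: whenever $\omega,\omega'\in\Omega$ satisfy $K(\omega)\subseteq K(\omega')$, we have $P^{\omega}(S_n)\le P^{\omega'}(S_n)$ for every $n\ge 0$. The second assertion then follows by comparing the all-vacant configuration $\omega_0$ (for which $K(\omega_0)=\emptyset$ and $P^{\omega_0}=P^{*}$) with an arbitrary $\omega$, using $\emptyset\subseteq K(\omega)$. For the first assertion I would use the standard monotone coupling of obstacles: introduce i.i.d.\ uniforms $\{U_x\}_{x\in\mathbb{Z}^d}$ and set $K_{p}:=\{x:U_x\le p\}$ and $K_{\hat p}:=\{x:U_x\le\hat p\}$ on a common probability space, so that $K_{p}\subseteq K_{\hat p}$ almost surely; the quenched inequality applied pointwise, followed by an expectation over this coupling, yields $\mathbf{P}_p(S_n)\le\mathbf{P}_{\hat p}(S_n)$.

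\textbf{Quenched monotonicity by induction.} Let $\varphi$ denote the offspring generating function ($\varphi(s)=\tfrac{1+s^{2}}{2}$ in the critical case; in the subcritical case $\varphi$ is convex and nondecreasing on $[0,1]$ with $\varphi(1)=1$ and $\varphi'(1)=\mu<1$), and set $v_n^{\omega}(x):=P^{\omega}(S_n^{c}\mid\text{single particle initialized at }x)$. A one-step conditioning on the first jump and on the branching outcome gives
\[
v_n^{\omega}(x)=\frac{1}{2d}\sum_{y\sim x}\Bigl[\mathbb{I}_{y\in K(\omega)}\,v_{n-1}^{\omega}(y)+\mathbb{I}_{y\notin K(\omega)}\,\varphi\bigl(v_{n-1}^{\omega}(y)\bigr)\Bigr],\qquad v_0^{\omega}\equiv 0.
\]
I would prove by induction on $n$ that $K(\omega)\subseteq K(\omega')$ implies $v_n^{\omega}(x)\ge v_n^{\omega'}(x)$ for every $x$. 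For $y\in K(\omega)$ and for $y\notin K(\omega')$ the $y$-summand in the two recursions has the same functional form and is nondecreasing in $v_{n-1}$ (since $\varphi$ is nondecreasing), so the induction hypothesis transports through. The delicate case is $y\in K(\omega')\setminus K(\omega)$: there the $\omega$-summand is $\varphi(v_{n-1}^{\omega}(y))$ whereas the $\omega'$-summand is $v_{n-1}^{\omega'}(y)$, and the needed bound will follow from the pointwise inequality $\varphi(s)\ge s$ on $[0,1]$ together with the induction hypothesis.

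\textbf{Main obstacle.} The only substantive ingredient is the bound $\varphi(s)\ge s$ on $[0,1]$, which is simply $(s-1)^{2}\ge 0$ in the critical case, and in the subcritical case follows from convexity of $g(s):=\varphi(s)-s$ together with $g(1)=0$ and $g'(1)=\mu-1<0$, which force $g$ to be nonincreasing on $[0,1]$ and hence nonnegative there. I would avoid a trajectory-level coupling of the two BRWs: converting a vacant site into an obstacle can decrease the particle count at that step (it eliminates the doubling outcome), so no pathwise domination $|Z_n^{\omega}|\le|Z_n^{\omega'}|$ is available. Monotonicity emerges only after averaging, and that is exactly what the induction on the survival-probability recursion captures.
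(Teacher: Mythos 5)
Your proof is correct and it is essentially the argument the paper points to (via \cite{ES}): induct on $n$ using the one-step recursion for the extinction probability, and observe that the only place the offspring law enters is through the inequality $\varphi(s)\ge s$ on $[0,1]$ — exactly the property the paper singles out as the crux, and which you correctly verify in both the critical and subcritical cases. The reduction of the annealed inequality to the quenched one via the standard monotone coupling $K_p\subseteq K_{\hat p}$, and the derivation of the second assertion by comparing against the all-vacant configuration, are both sound, and your remark that a naive pathwise domination of $|Z_n|$ is unavailable correctly explains why the generating-function recursion (rather than a trajectory coupling) is the right tool.
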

Although \cite{ES} only handles the critical case, the same proof carries through for the subcritical case as well. The proof only uses the fact that if $\varphi$ is the generating function of the offspring distribution, then $\varphi (z)\ge z$ on $[0,1]$. This remains the case for subcritical branching too, since $\varphi(1)=1,\varphi '(1)<1$ and $\varphi$ is convex from above on the interval.
\begin{lem}[Extinction (Theorem 2.2  in \cite{ES})]
\emph{}\label{ext} Let $0\le p<1$ and let $A$ denote the event
that the population survives forever. Then, for ${\mathbb{P}}_{p}$-almost
every environment, $P^{\omega}(A)=0.$ \end{lem}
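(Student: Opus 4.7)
My plan is to combine a martingale convergence argument with an analysis of the only way a critical branching process can avoid extinction. First I observe that $|Z_n|$ is a non-negative $P^\omega$-martingale under our dynamics: a particle landing on a vacant site produces $0$ or $2$ offspring with equal probability (mean $1$), while a particle landing on an obstacle keeps exactly $1$ offspring, so in either case $E^\omega[|Z_{n+1}|\mid \mathcal{F}_n] = |Z_n|$. By the non-negative martingale convergence theorem, $|Z_n|$ converges $P^\omega$-a.s.\ to a finite random variable $L_\infty$; since $|Z_n|$ is integer valued, this forces the sequence to be eventually constant equal to $L_\infty$. It therefore suffices to show $P^\omega(L_\infty\ge 1)=0$ for $\mathbb{P}_p$-a.e.\ $\omega$.

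Suppose toward a contradiction that with positive $P^\omega$-probability the population stabilizes at some $k\ge 1$. Denote by $V_n$ the number of particles sitting at vacant sites at time $n$ (after the random-walk step, before branching) and put $\Delta_n := |Z_n|-|Z_{n-1}|$. Eventual constancy forces $\Delta_n=0$ for all large $n$, and I would split into two complementary subcases. In the first subcase, $V_n\ge 1$ for infinitely many $n$; at each such time $\Delta_n$ is a sum of $V_n$ independent fair $\pm 1$'s, and a short combinatorial estimate (trivial for $V_n$ odd; using $\binom{2m}{m}/2^{2m}\le 1/2$ for $V_n=2m\ge 2$) yields $P^\omega(\Delta_n\ne 0\mid \mathcal{F}_{n-1})\ge 1/2$ on $\{V_n\ge 1\}$. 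The conditional Borel--Cantelli lemma then produces $\Delta_n\ne 0$ infinitely often, contradicting eventual constancy.

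In the complementary subcase, $V_n=0$ for all $n\ge N$ for some random $N$, meaning that from time $N$ onward no branching occurs and the $k=|Z_N|$ surviving particles evolve as independent simple random walks in $\mathbb{Z}^d$ that never visit the vacant set $\mathbb{Z}^d\setminus K$. This is the main technical step: I would show that for $\mathbb{P}_p$-a.e.\ $\omega$, a simple random walk visits $\mathbb{Z}^d\setminus K$ infinitely often $P^\omega$-a.s. In $d\le 2$ this is immediate, since the walk is recurrent and the vacant set is (a.s.) infinite. In $d\ge 3$ I would invoke Kesten--Spitzer--Whitman, which gives $|R_n|/n\to c>0$ a.s.\ for the range $R_n$; conditionally on the walk the random variable $|R_n\cap(\mathbb{Z}^d\setminus K)|$ has distribution $\mathrm{Bin}(|R_n|,q)$ under $\mathbb{P}_p$, so a Fubini/concentration argument yields $|R_n\cap(\mathbb{Z}^d\setminus K)|\to\infty$ for $\mathbb{P}_p$-a.e.\ $\omega$, $P^\omega$-a.s. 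A union bound over the $k$ particles rules out this subcase and completes the proof. The same scheme adapts to the subcritical case, using that $|Z_n|$ is then a supermartingale and that $L\ne 1$ with positive probability by subcriticality.
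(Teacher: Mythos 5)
Your argument is correct and follows the martingale route the paper attributes to \cite{ES}: the population size is a nonnegative martingale (supermartingale in the subcritical case), hence a.s.\ convergent and, being integer-valued, eventually constant, and one then rules out stabilizing at a positive value via the walk's visits to vacant sites. Two small polishing points worth noting. First, $V_n$ is not $\mathcal{F}_{n-1}$-measurable, so the conditional Borel--Cantelli step should be run against the refined filtration that reveals the random-walk displacements at step $n$ but not yet the branching outcomes; with that adjustment $P^\omega(\Delta_n\ne 0\mid\cdot)\ge\frac12\mathbb{I}_{\{V_n\ge 1\}}$ is legitimate and L\'evy's extension of Borel--Cantelli applies. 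Second, the appeal to Kesten--Spitzer--Whitman in $d\ge 3$ is heavier than necessary: in every dimension the range of a nearest-neighbour walk is a.s.\ infinite, so conditioning on the trajectory gives $\mathbb{P}_p(\text{range}\subset K)=p^{\infty}=0$, and Fubini plus countability of $\mathbb{Z}^d$ already yields that for $\mathbb{P}_p$-a.e.\ $\omega$ and every starting point the walk a.s.\ visits $K^c$, with no concentration estimate and no dimension split needed.
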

Again, \cite{ES} only handles the critical case, but the same proof carries through for the subcritical case as well. (One then uses that the population size is a supermartingale, instead of a martingale.)

Lemma \ref{monotonicity} yields the following a priori  bounds.
\begin{cor}[A priori  bounds]\label{apriori}  Let
 $f:\mathbb{Z}_+\to (0,\infty)$. Then the following holds.

{\bf (i) Critical case:} On a set of full $\mathbb{P}_{p}$-measure, 
\begin{equation}
    \liminf_{n\to\infty}nP^{\omega}(S_n)\ge 2,\label{elso}
\end{equation}
 and
\begin{equation}
P^{\omega}(|Z_n|>f(n)\mid S_n)=\mathcal{O}\left(\frac{n}{f(n)}\right),\label{masodik}
\end{equation}
as $n\to\infty$.

\medskip
{\bf (ii) Subcritical case:} On a set of full $\mathbb{P}_{p}$-measure, 
\begin{equation}
    \liminf_{n\to\infty}\mu^{-n}P^{\omega}(S_n)>0.\label{elso.subcr}
\end{equation}
\end{cor}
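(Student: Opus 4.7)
The plan is to leverage Lemma \ref{monotonicity} aggressively: the spatial BRW dominates the $p=0$ case pointwise, and the $p=0$ case is just a classical Galton--Watson process, because when there are no obstacles every particle branches at every step and the spatial displacements are irrelevant to the population size.

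First I would observe that, for any $\omega\in\Omega$ and any $n\ge 1$,
\begin{equation*}
P^\omega(S_n)\ \ge\ P^*(S_n),
\end{equation*}
which is the second assertion of Lemma \ref{monotonicity}. Under $P^*$ the process $|Z_n|$ is just the size of a Galton--Watson tree with offspring generating function $\varphi$ (the $0$-$2$ law in the critical case, the law $\mathcal{L}$ in the subcritical case), because when $p=0$ branching occurs deterministically at every unit time step.

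For (i), Kolmogorov's theorem (the formula \eqref{ANK} of the introduction) applied to $\varphi(z)=(1+z^2)/2$ gives $\varphi''(1)=1$, so $P^*(S_n)\sim 2/n$. Combined with the above, this yields \eqref{elso}; note there is no exceptional $\mathbb{P}_p$-null set here (the bound holds for every $\omega$). For \eqref{masodik}, I would use that $|Z_n|$ is a nonnegative $P^\omega$-martingale with mean $1$, so Markov's inequality gives $P^\omega(|Z_n|>f(n))\le 1/f(n)$. Dividing by $P^\omega(S_n)$ and using the already established lower bound $P^\omega(S_n)\ge (2-\varepsilon)/n$ for $n$ large yields
\begin{equation*}
P^\omega(|Z_n|>f(n)\mid S_n)\ \le\ \frac{1/f(n)}{P^\omega(S_n)}\ =\ \mathcal{O}\!\left(\frac{n}{f(n)}\right).
\end{equation*}

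For (ii), the same domination $P^\omega(S_n)\ge P^*(S_n)$ applies, and now $P^*$ drives a subcritical Galton--Watson process with mean $\mu<1$. Under the $L\log L$ condition (Assumption~1), the Heathcote--Seneta--Vere-Jones theorem (quoted as Theorem~B in \cite{LPP95}) gives $\lim_{n\to\infty} \mu^{-n} P^*(S_n) = c > 0$. Hence $\liminf_{n\to\infty}\mu^{-n} P^\omega(S_n)\ge c>0$, which is \eqref{elso.subcr}.

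There is no real obstacle: the whole corollary is a one-line reduction to the classical unit-time Galton--Watson asymptotics via monotonicity, plus a Markov inequality for \eqref{masodik}. The only thing worth flagging is that the bounds actually hold for \emph{every} $\omega$, not merely on a set of full $\mathbb{P}_p$-measure, so the qualifier in the statement is a convenience for how the corollary will be invoked later rather than a genuine necessity.
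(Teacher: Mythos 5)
Your proof is correct and takes essentially the same route as the paper: monotonicity (Lemma~\ref{monotonicity}) reduces each bound to the classical non-spatial Galton--Watson asymptotic (Kolmogorov for critical, Heathcote--Seneta--Vere-Jones for subcritical), and \eqref{masodik} follows by Markov's inequality once the reciprocal relation $E^{\omega}(|Z_n|\mid S_n)=1/P^{\omega}(S_n)$ is in hand --- your version via $P^{\omega}(|Z_n|>f(n))\le 1/f(n)$ is the same estimate read unconditionally rather than conditionally. Your closing remark that the bounds actually hold for \emph{every} $\omega$, not merely $\mathbb{P}_p$-a.e., is a valid and slightly sharper observation, since the domination $P^{\omega}(S_n)\ge P^{*}(S_n)$ is pointwise in $\omega$.
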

\begin{proof}
(i) In the critical case,
by comparing with the $p=0$ (no blockers) case, when survival is less likely, and for which the non-spatial result 
of Kolmogorov \eqref{ANK} is applicable,
\eqref{elso} follows by Lemma \ref{monotonicity}. Using that $1=E^{\omega}(|Z_n|)=P^{\omega}(S_n)
E^{\omega}(|Z_n|\mid S_n)$, we infer that
$$\limsup_{n\to\infty} \frac{1}{n}E^{\omega}(|Z_n|\mid S_n)\le 1/2.$$ Finally, use the Markov inequality to get \eqref{masodik}.

\medskip
(ii) In the subcritical case ($\mu<1$), the proof is very similar, taking into account  the well known result of Heathcote, Seneta and Vere-Jones (see Theorem B in \cite{LPP95}) that under the $L\log L$ condition, 
\eqref{elso.subcr} holds with limit instead of $\liminf$ for $p=0$.
\end{proof}

\section{Further preparation: Size-biasing and spine in the critical case}

Consider the critical case in this section. Given \eqref{reciprocal}, the asymptotic relation under \eqref{quenched.survival.asymptotics} is tantamount to
\begin{equation}
{E}^{\omega}(|Z_{n}|\mid S_{n})\sim\frac{qn}{2},\label{tantamount}
\end{equation}
as $n\to\infty$. 
We will actually prove that \eqref{tantamount} holds on a set of full $\mathbb P_p$-measure. 

In the particular case when $q=1$ (branching always takes place) and in a non-spatial setting, this has been shown in \cite{LPP95} (see formula (4.1) and its proof on p. 1132).
We will show how to modify the proof in \cite{LPP95} for our case.

(In the subcritical case, we will also reduce the question to the study of the behavior of ${E}^{\omega}(|Z_{n}|\mid S_{n})$ as $n\to\infty$.)

\subsection{Left-right labeling}
At every time of fission, randomly (and independently from everything else in the model) assign `left' or `right' labels to the two offspring. So, from now on, every time we write $P^{\omega}(\cdot\mid S_n)$, we will actually mean,  with a slight abuse of notation,  $P^{\omega}(\cdot\mid S_n)$, augmented with the choice of the labels; we will handle $P^{\omega}(\cdot\cap S_n)$ similarly.  
Ignoring space, and looking only at the genealogical tree, we say that at time $n$, a particle is `to the left' of another one, if, tracing them back to their most recent common ancestor, the first particle is the descendant of the left particle right after the fission. Transitivity is easy to check and thus a total ordering of particles at time $n$ is induced.

\subsection{The size-biased critical branching random walk}
Recall that if $\{p_k\}_{k\ge 0}$ is a probability distribution on the nonnegative integers with  expectation $m\in (0,\infty)$, then the corresponding {\it size-biased distribution} is defined by $\widehat{p}_k:=kp_k/m$ for $k\ge 1$. We will denote the size biased law obtained from $\mathcal{L}$ by $\widehat{\mathcal{L}}$.

Given the environment $\omega$,  the {\it size-biased critical branching random walk} with corresponding law $\widehat{P}^{\omega}$ is as follows. 
\begin{itemize}
    \item The initial particle does not branch until the first time it steps on a vacant site, at which moment it splits into a random number offspring according to $\widehat{\mathcal{L}}$.  
    \item One of the  offspring is picked uniformly (independently from everything else) to be designated as the `spine offspring.' The other offspring launch  copies of the original branching random walk (with $\omega$ being translated according to the position of the site). 
    \item Whenever the `spine offspring' is situated next time at a vacant site, it splits into  a random number offspring according to $\widehat{\mathcal{L}}$, and the above procedure is repeated, etc. 
\end{itemize}
\begin{defn}[Spine]
The distinguished line of decent formed by the successive spine offspring will be called the {\it spine.}
\end{defn}
Note the following.
\begin{itemize}

\item[(i)] {\bf (Survival)} Because of the size biasing, the new process is immortal $\widehat{P}^{\omega}$-a.s.

\item[(ii)] {\bf (Martingale change of measure)} For any given $\omega$, the law of the size-biased critical random walk satisfies that $$\left.\frac{\mathrm{d}\widehat{P}^{\omega}}{\mathrm{d}P^{\omega}}\right\vert_{\mathcal{F}_{n}}=|Z_n|,$$
where $\{\mathcal{F}_{n}\}_{n\ge 0}$ is the natural filtration of the branching random walk, and the lefthand side is a Radon-Nikodym derivative on $\mathcal{F}_{n}$.
This is a change of measure by the nonnegative, unit mean martingale $|Z|$. The proof is essentially the same as in \cite{LPP95}. Even though in that paper the setting is non-spatial, it is easy to check that the proof carries through in our case, because the mean offspring number is always one, irrespective of the site. (See p.1128 in \cite{LPP95}.)
\end{itemize}

In particular,  when the critical law $\mathcal{L}$ is binary (either $0$ or $2$ offspring, with equal probabilities), the law $\widehat{\mathcal{L}}$ is deterministic, namely it is dyadic (that is, $2$ offspring with probability one). In this case, the spine particle always splits into two at vacant sites.

In addition to $\widehat{P}^{\omega}$, we also define the law $\widehat{P}^{\omega}_{*}$ which is the distribution of the size-biased branching random walk, {\it augmented with} the designation of the spine within it. The corresponding, augmented filtration, $\{\mathcal{G}_n\}_{n\ge 0}$ is richer than $\{\mathcal{F}_{n}\}_{n\ge 0}$, as it now keeps track of the position of the spine as well.

The significance of the new law $\widehat{P}^{\omega}$ is as follows. 
Let us denote the spine's path up to $n$ by  $\{X_i\}_{0\le i\le n}$. 
Let 
$$A_n:=\{\text{The spine particle is the leftmost particle of}\ Z_n.\}.$$ 
Then, size biasing and conditioning on $A_n$ has the combined effect of simply conditioning the process on survival up to $n$. That is, the distribution of $Z$ restricted on $\{\mathcal{F}_{n}\}$ is the same
under
${P}^{\omega}(\cdot\mid S_{n})$ and under $\widehat{P}^{\omega}_{*}(\cdot\mid A_{n}).$
To see why this is true, let $C_{n,k}:=\{|Z_n|=k\}$. 
One has for $F\in \mathcal{F}_{n}$ that
\begin{align*}
     \widehat{P}^{\omega}_{*}(F\mid A_{n})&=\frac{\widehat{P}^{\omega}_{*}(F\cap A_{n})}{\widehat{P}^{\omega}_{*}( A_{n})}
     =\frac{\sum_{k\ge 1}\widehat{P}^{\omega}_{*}(F\cap A_{n}\cap C_{n,k})}{\sum_{k\ge 1}\widehat{P}^{\omega}_{*}( A_{n}\cap C_{n,k})}\\ &=\frac{\sum_{k\ge 1}(1/k)\widehat{P}^{\omega}(F\cap C_{n,k})}{\sum_{k\ge 1}(1/k)\widehat{P}^{\omega}(C_{n,k})}\\
    &=\frac{\sum_{k\ge 1}(1/k)E^{\omega}(|Z_n|;F\cap C_{n,k})}{\sum_{k\ge 1}(1/k)E^{\omega}(|Z_n|; C_{n,k})}
    =\frac{\sum_{k\ge 1}P^{\omega}(F\cap C_{n,k})}{\sum_{k\ge 1}P^{\omega}(C_{n,k})}\\
    &=\frac{P^{\omega}(F\cap S_n)}{P^{\omega}(S_n)}=P^{\omega}(F\mid S_n).
    \end{align*}
In particular, 
\begin{equation}\label{turn.to.size-biased}
{E}^{\omega}(|Z_{n}|\mid S_{n})=\widehat{E}_{*}^{\omega}(|Z_{n}|\mid A_{n}).
\end{equation}

\subsection{Frequency of vacant sites along the spine in the critical case}\label{frequency}
Let the branching be critical, and 
let $$L_n:=\sum_{i=1}^n \mathbb{I}_{\{X_{i}\in K^c\}}$$ denote the (random) amount of  time spent by $X$ (the spine) on vacant sites between times $1$ and $n$. 
\begin{lemma}[Frequency of visiting vacant sites]\label{qn} On a set of full $\mathbb P_p$-measure,  
$$\lim_{n\to\infty} \widehat{P}_{*}^{\omega}\left (\left|\frac{L_n}{n}-q\right|>\epsilon \right)=0,\ \forall \epsilon>0.$$
\end{lemma}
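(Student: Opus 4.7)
The plan is to exploit the fact that, under $\widehat{P}_{*}^{\omega}$, the spine $\{X_i\}_{i\ge 0}$ is a nearest-neighbor simple random walk on $\mathbb{Z}^d$ starting at the origin, whose law does not depend on $\omega$: at each unit time the spine takes a simple random walk step, and $\omega$ only dictates whether a branching event is triggered at the landing site. Consequently, under the product law $\mathbb{P}_p\otimes\widehat{P}_{*}^{\omega}$, the spine $X$ and the environment $\omega$ are independent, with joint distribution $\mathbb{P}_p\otimes P^{\mathrm{SRW}}$, where $P^{\mathrm{SRW}}$ denotes the law of simple random walk on $\mathbb{Z}^d$ from the origin. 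It therefore suffices to prove an almost-sure law of large numbers for $L_n/n$ under $\mathbb{P}_p\otimes P^{\mathrm{SRW}}$ and then descend to the quenched statement via Fubini.

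Set $Y_i:=\mathbb{I}_{\{X_i\in K^c\}}$, so that $L_n=\sum_{i=1}^n Y_i$, and let $\mathbf{E}$ denote expectation under $\mathbb{P}_p\otimes P^{\mathrm{SRW}}$. Since each lattice site is vacant with probability $q$ independently of everything else,
$$\mathbf{E}[Y_i]=q,\qquad \mathrm{Cov}(Y_i,Y_j)=pq\,P^{\mathrm{SRW}}(X_i=X_j),$$
the covariance identity following because, conditional on the walk, $Y_i$ and $Y_j$ coincide on $\{X_i=X_j\}$ and are independent on its complement. Summing and using translation invariance of the walk,
$$\mathrm{Var}(L_n)=pq\Bigl(n+2\sum_{k=1}^{n-1}(n-k)p_k\Bigr),\qquad p_k:=P^{\mathrm{SRW}}(X_k=0).$$
The local central limit theorem bound $p_k=O(k^{-d/2})$ then gives $\mathrm{Var}(L_n/n)\to 0$ in every dimension: the decay is $O(1/n)$ for $d\ge 3$, $O((\log n)/n)$ for $d=2$, and $O(n^{-1/2})$ for $d=1$.

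To upgrade to almost-sure convergence under $\mathbb{P}_p\otimes P^{\mathrm{SRW}}$, I choose a subsequence $n_k$ with $n_{k+1}/n_k\to 1$ along which $\sum_k\mathrm{Var}(L_{n_k}/n_k)<\infty$; the choice $n_k=k^3$ works even in the most demanding case $d=1$. Borel--Cantelli then yields $L_{n_k}/n_k\to q$ almost surely. Since $L_n$ is non-decreasing in $n$ with increments in $\{0,1\}$, sandwiching between $L_{n_k}$ and $L_{n_{k+1}}$ extends the convergence to $L_n/n\to q$ for all $n$. Fubini's theorem then produces, for $\mathbb{P}_p$-a.e.\ $\omega$, the $\widehat{P}_{*}^{\omega}$-a.s.\ convergence $L_n/n\to q$, which is strictly stronger than the stated convergence in probability.

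The only mildly delicate point is the variance bound in the recurrent dimensions $d=1,2$, where the walk's self-intersections inflate the covariance sum. The weakest bound arises at $d=1$, where $\mathrm{Var}(L_n)=O(n^{3/2})$; nevertheless $\mathrm{Var}(L_n/n)=O(n^{-1/2})\to 0$, so this elementary second-moment argument suffices in all dimensions, with no need for heavier machinery such as ergodicity of the environment viewed from the particle.
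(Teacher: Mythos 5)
Your proof is correct and takes a genuinely different, and more elementary, route than the paper's. The observation you exploit at the outset is the right one and is the crux: under $\widehat{P}^{\omega}_{*}$ the spine $\{X_i\}$ is just a simple random walk whose law does not depend on $\omega$, so under the product measure the spine and the environment are independent and $L_n$ becomes an additive functional of an SRW in a fresh IID scenery. From there, the covariance identity $\mathrm{Cov}(Y_i,Y_j)=pq\,P^{\mathrm{SRW}}(X_i=X_j)$ is exact, the local CLT bound $p_k=O(k^{-d/2})$ controls the double sum even in the recurrent cases $d=1,2$, Chebyshev plus Borel--Cantelli along $n_k=k^3$ together with the monotonicity of $L_n$ give an almost-sure LLN under the product measure, and Fubini descends to the quenched statement. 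This is clean and self-contained. The paper proceeds quite differently: it first uses the conditioning-on-$A_n$ identity to replace $\widehat{P}^{\omega}_{*}$ by $P^{\omega}(\cdot\mid S_n)$, then union-bounds over the at most $f(n)$ surviving particles, invokes the a priori bound $P^{\omega}(S_n)\gtrsim 1/n$ from Corollary~\ref{apriori}, and appeals to the annealed random-walk-in-random-scenery large-deviation estimate of \cite{GKS}, transferred to the quenched setting via Markov and Borel--Cantelli. This is heavier machinery, but it delivers a stretched-exponential quenched decay of $\widehat P^\omega_*(|L_n/n-q|>\epsilon)$, far stronger than the $O(n^{-1/2})$ (in $d=1$) that your Chebyshev estimate yields. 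For Lemma~\ref{qn} as stated --- mere convergence in probability --- your argument is complete and arguably preferable; the paper's quantitative overkill is not wasted, however, since the $o(1/n)$-type rate for this deviation probability is precisely what gets recycled later in the paper, and your second-moment bound alone would not supply it.
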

\begin{proof}

Let $$F_{n,\epsilon}:=\bigcup_{1\le i\le |Z_n|}\left\{\left|\frac{L^i_n}{n}-q\right|>\epsilon\right\}\in\mathcal{F}_n,$$ where
${L^i_n}$ is defined similarly to $L_n$ for $Z^{i,n}$, the $i^{th}$ particle in $Z_n$ on $S_n$; we define $F_{n,\epsilon}:=\emptyset$ on $S_n^c$. Since $A_n$ concerns only labeling and is independent of the event $\left \{\left|\frac{L_n}{n}-q\right|>\epsilon \right\}$, one has
$$ 
\widehat{P}^{\omega}_{*}\left (\left|\frac{L_n}{n}-q\right|>\epsilon \right)=
\widehat{P}^{\omega}_{*}\left (\left|\frac{L_n}{n}-q\right|>\epsilon \mid A_n\right)
\le
\widehat{P}_{*}^{\omega}\left (F_{n,\epsilon}\mid A_n\right),$$
and switching back to the original measure now, the righthand side equals
\begin{equation*}
{P}^{\omega}\left (F_{n,\epsilon}\mid S_n\right)=
\frac{
P^{\omega}
\left (F_{n,\epsilon}\cap\ S_n
\right)}{P^{\omega} (S_n)}\le \frac{
P^{\omega}
\left (F_{n,\epsilon}
\right)}{P^{\omega} (S_n)}.
\end{equation*}
In view of Corollary  \ref{apriori}(i), it is  sufficient to show that on a set of full $\mathbb P_p$-measure,
$
P^{\omega}\left (F_{n,\epsilon}\right)=o(1/n),
$
as $n\to\infty$.
 
To this end, let $f$ be a positive function on the positive integers, to be specified later. Using  the union bound,
$$P^{\omega}\left(F_{n,\epsilon}\right)\le f(n) Q^{\omega}\left(\left|\frac{T_{n}}{n}-q\right|>\epsilon\right)+P^{\omega}(|Z_n|>f(n)\mid S_n),$$
where $T_n$ denotes the time spent on vacant sites between  times $1$ and $n$ by a simple random walk in the environment $\omega$, starting at the origin  with corresponding probability $Q^{\omega}$.
We will use   $\mathbb{Q}_p$ for the law of the environment.
 
By Corollary  \ref{apriori}(i), on a set of full $\mathbb P_{p}$-measure, the second term on the righthand side is $\mathcal{O}\left(\frac{n}{f(n)}\right)$ as $n\to\infty$. Therefore it is enough to find a function $f$ such that 
\begin{equation}\label{bigger.than.n^2}
n^2=o(f(n))
\end{equation}
and that on a set of full $\mathbb Q_{p}$-measure,
\begin{equation}\label{smaller.than.1/n}
 f(n) Q^{\omega}\left(\left|\frac{T_{n}}{n}-q\right|>\epsilon\right)=o(1/n)\ \text{as} \ n\to\infty.
\end{equation}
 
Observe that  it is sufficient to verify the upper tail large deviations. Indeed,
the lower tail large deviations for the time spent in $K^c$ can be handled similarly, since they are exactly the upper tail large deviations for the time spent in $K$.
 
The statement reduces to one about a $d$-dimensional  {\it random walk in random scenery}.\footnote{Random walks in random scenery (RWRS) were first introduced, in dimension one, by Kesten-Spitzer and also by  Borodin, in 1979.  (See e.g. \cite{GPP13,GPdS14}.)} We now have to consider a scenery  such that it assigns the value 1 to each lattice point with probability $q$, and the value $0$ (that is, the scenery is the indicator of vacancy). With a slight abuse of notation, we will still use  $Q^{\omega}$ and $\mathbb{Q}_p$ for the corresponding laws.

Since it was easier to locate the corresponding annealed result in the literature (see also the remark at the end of the proof), we will use that one, and then show how one easily gets the quenched statement from the annealed one.


To this end, define the random variable $Y^*:=Y-q$, where $Y$  is the `scenery variable,' that is, $Y=1$ with probability $q$ and $Y=0$ otherwise. Then $Y^*$ is centered, and defining $$T_n^*:=\sum_{k=1}^{n}Y^*(X_k),$$ one can apply Theorem 1.3 in \cite{GKS}, yielding that for $\epsilon>0$, 
\begin{align*}
\lim_{n\to\infty}n^{-\frac{d}{d+2}}\log (E_{\mathbb Q_{p}}\otimes Q^{\omega})\left(\frac{T_n}{n}>q+\epsilon\right)&=\\
\lim_{n\to\infty}n^{-\frac{d}{d+2}}\log (E_{\mathbb Q_{p}}\otimes Q^{\omega})\left(\frac{T_n^*}{n}>\epsilon\right)&=-C_{\epsilon}<0.
\end{align*}
We now easily obtain the quenched result too, since for any positive sequence $\{a_n\}_{n\ge 0}$, the Markov inequality yields
$$
\mathbb Q_p\left(Q^{\omega}\left(\frac{T_n}{n}>q+\epsilon\right)>a_n\right)\le (a_n)^{-1}(E_{\mathbb Q_{p}}\otimes Q^{\omega})\left(\frac{T_n}{n}>q+\epsilon\right).$$
Given that $(E_{\mathbb Q_{p}}\otimes Q^{\omega})\left(\frac{T_n}{n}>q+\epsilon\right)=\exp\left(-C_{\epsilon}n^{\frac{d}{d+2}}(1+o(1))\right)$, to finish the proof, we can pick any sequence  satisfying that
$$\sum_n a_n^{-1}\exp\left(-\frac{1}{2}C_{\epsilon}n^{\frac{d}{d+2}}\right)<\infty.$$ Then, by the Borel-Cantelli Lemma,  $$\mathbb Q_p\left(Q^{\omega}\left(\frac{T_n}{n}>q+\epsilon\right)>a_n\ \text{occurs finitely often}\right)=1.$$
Clearly, if we pick, for example,  $a_n:=1/n^4$, then  \eqref{bigger.than.n^2}
and \eqref{smaller.than.1/n} are satisfied (take $f(n):=n^{5/2}$).
\end{proof}
\begin{rem}
Regarding RWRS, we note that in  Theorem 2.3 in \cite{AC03} the {\it quenched} large deviations have been studied in  a more continuous version, namely for a Brownian motion in a random scenery, where the scenery is constant on blocks in $\mathbb R^d$. 
\end{rem}

\section{Proof of Theorem \ref{main.thm} --- critical case}
Our goal is to verify \eqref{tantamount}. To this end, note
that the spine has a (nonnegative) number of `left bushes'  and a (nonnegative) number of `right bushes' attached to it; 
each such bush is a branching tree itself. 
A `left bush' (`right bush') is formed by particles which are to the left (right) of the spine particle at time $n$.
It is clear that, under conditioning on $A_n$, each left bush dies out completely by time $n$  (see Fig. 5.1).
\begin{figure}[h]
	\centering
	\centerline{\includegraphics[width=.85\textwidth]{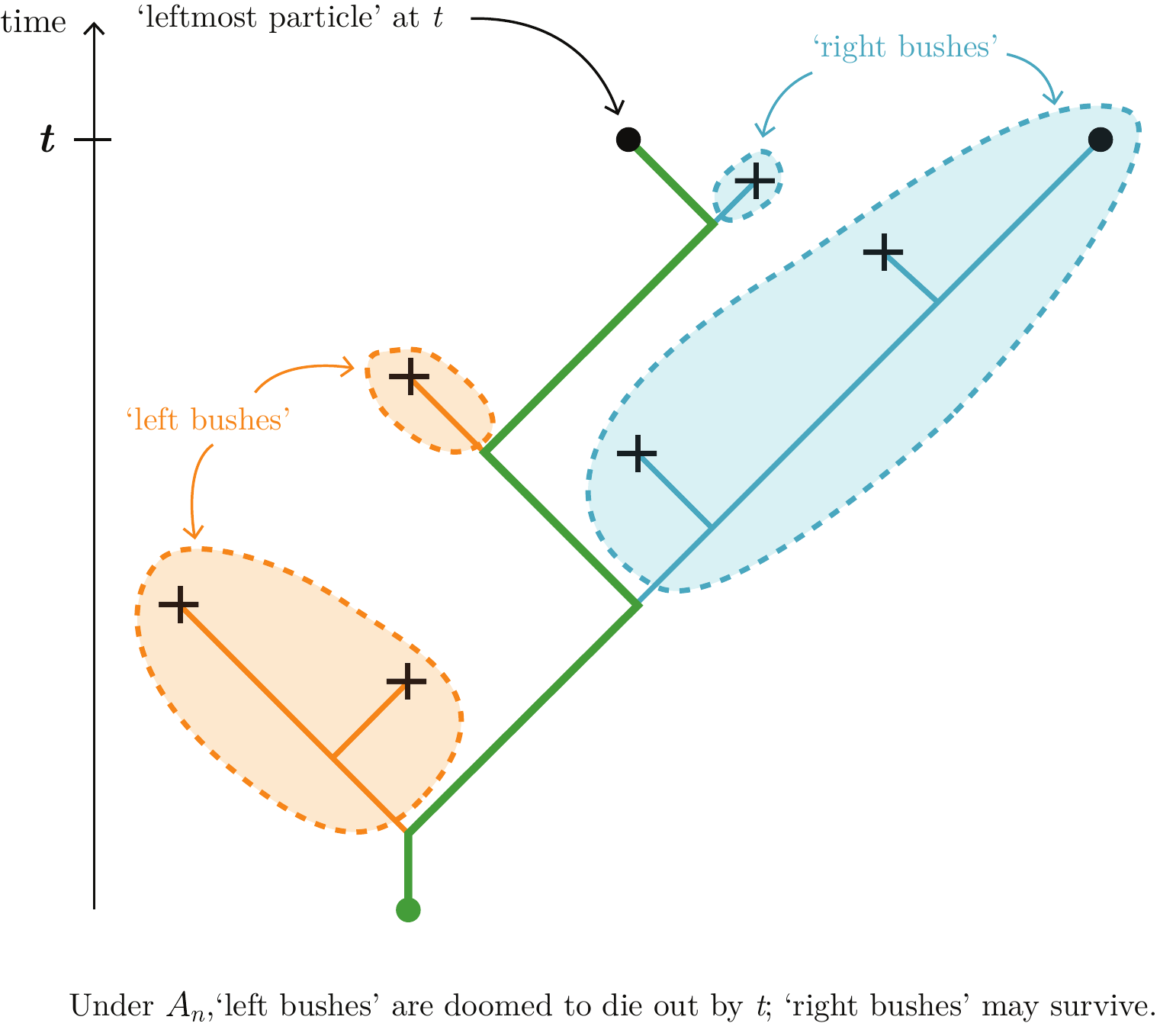}}
    \caption{}
\end{figure}

Because of \eqref{turn.to.size-biased}, we are left with the task of showing that
\begin{equation}
\widehat{E}_{*}^{\omega}(|Z_{n}|\mid A_{n})\sim\frac{qn}{2},\ n\to\infty.\label{tantamount.but.size.biased}
\end{equation}
The proof of this statement is similar to the proof of (4.1) in \cite{LPP95} (with $\sigma^2=1$), except that, as we will see, now we  also have to show that
\begin{equation}\label{last.statement}
\widehat{E}_{*}^{\omega}(\mathrm{number\ of\ all\ bushes\ along\  the \ spine})= qn(1+o(1)).
\end{equation}
The reason is that in \cite{LPP95}, the spine particle branched at every unit time, which is not the case now. In our case,  the spine $\{X_i\}_{1\le n}$ splits into two at each vacant site and thus bushes are attached each time (larger than zero and smaller than $n$) when $X$ is at a vacant site. 

For $n\ge 1$ given, define the set of indices  $$J_n:=\{1\le j\le n-1\mid X_j\in \mathbb{Z}^d\ \text{is a vacant site}\};$$ then \eqref{last.statement} can be written as
$
\widehat{E}_{*}^{\omega}(|J_n|)= qn(1+o(1))
$ (cf. equation \eqref{as.promised} in the sequel).
Furthermore, let $j\in J_n$ and
\begin{itemize}
\item $(LB)_j$ be the event that there is a left bush launched from the space-time point $(X_j,j)$; 

\item $(RB)_j$ be the event that there is a right bush launched from $(X_j,j)$;

\item $(LBE)_j$ be the event that there is a left bush launched from $X_j$ which becomes extinct by $n$.

\item $A_{n,j}:=(RB)_j\cup (LBE)_j$.

\end{itemize}
Then  \begin{equation}\label{cap}
A_n=\bigcap_{j\in J_n} A_{n,j},
\end{equation}
 where the events in the intersection are independent under $\widehat{P}^{\omega}$. (See again Fig. 5.1.)
Conditioning on $A_n$ can be obtained by conditioning successively on $A_{n,j},\ j\in J$.

For $j\in J_n$, let the random variable $R_{n,j}$ be the `right-contribution' of the $j^{th}$ bush to $|Z_n|$. That is, $R_{n,j}=0$ on $(LB)_j,$ and on $(RB)_j$ it is the contribution of the right bush, stemming  from $(X_j,j)$, to $|Z_n|$. The `left contribution' $S_{n,j}$ is defined similarly, and $Z_{n,j}:=R_{n,j}+S_{n,j}$ is the total contribution. Note that $A_{n,j}=\{S_{n,j}=0\}.$

Let $\{R'_{n,j}\}_{j\in J_n}$  be independent random variables under a law $\widetilde Q^{\omega}$ such that 
$$\widetilde Q^{\omega}(R'_{n,j}\in \cdot)=\widehat{P}_*^{\omega}(R_{n,j}\in\cdot\mid A_{n,j}),$$ and let  ${\sf Q}^{\omega}:=\widehat{P}^{\omega}_*\times\widetilde Q^{\omega}$, with expectation $E_{\sf Q}^{\omega}$. Furthermore, let  $R^*_{n,j}:= \mathbb{I}_{A_{n,j}}R_{n,j}+\mathbb{I}_{A^c_{n,j}}R'_{n,j},$ and  $R^*_{n}:=\sum_{j\in J} R^*_{n,j}.$ Then, for $j\in J_n$,
\begin{align}\label{turning.to.Q}
\widehat{P}_{*}^{\omega}(Z_{n,j}\in\cdot\mid A_{n,j})&=\widehat{P}_{*}^{\omega}(R_{n,j}+S_{n,j}\in\cdot\mid A_{n,j})
={\sf Q}^{\omega}(R^*_{n,j}\in\cdot).
\end{align}
(The $S_{n,j}$ term in the second probability has zero contribution.)
Using   \eqref{cap} along with \eqref{turning.to.Q}, it follows that 
\begin{align*}
&\frac{1}{n}\widehat{E}_{*}^{\omega}\left(|Z_{n}|\mid A_{n}\right)=\frac{1}{n}\widehat{E}_{*}^{\omega}\left(|Z_{n}|\mid \bigcap_{j\in J} A_{n,j}\right)=\\
&    \frac{1}{n}\widehat{E}_{*}^{\omega}\left(1+\sum_{j\in J_{n}}Z_{n,j}\mid \bigcap_{j\in J} A_{n,j}\right) =\\
& E_{\sf Q}^{\omega}\left(\frac{1}{n}+\frac{1}{n}\sum_{j\in J_{n}} R^*_{n,j}\right)=
\frac{1}{n}+E_{\sf Q}^{\omega}\left(\frac{R_n^*}{n}\right).
\end{align*}
Hence, the desired assertion \eqref{tantamount.but.size.biased} will follow once we show that (on a set of full $\mathbb{P}_p$-measure) $$\lim_{n\to\infty}E_{\sf Q}^{\omega}\left(\frac{R_n^*}{n}\right)=\frac{q}{2}.$$ 
Denoting $R_{n}:=\sum_{j\in J} R_{n,j},$ the same proof as in \cite{LPP95} reveals that
\begin{equation}\label{without.proof}
\lim_{n\to\infty}E_{\sf Q}^{\omega}\,\frac1n |R_n-R_n^*|= 0.
\end{equation}
(The intuitive reason is that $A^c_{n,j}=\{S_{n,j}>0\}$, while the probability of the survival of a bush 
tends to zero as the height of the bush tends to infinity; thus $A^c_{n,j}$ only occurs rarely.
The fact that now we do not have a bush launched at every position of the spine makes the estimated term even smaller.)

In view of \eqref{without.proof}, it is sufficient to show that $\lim_{n\to\infty}\widehat{E}^{\omega}_{*} \left(R_{n}/n\right)=q/2$.
Since the $\{R_{n,j}\}_{j\in J_{n}}$ are independent of $|J_n|$ and $\widehat{E}^{\omega}_{*} R_{n,j}=\frac12$ for each $j\in J_n$ 
(as each bush is equally likely to be left or right under $\widehat{P}^{\omega}_* $), one has
$$
\widehat{E}^{\omega}_{*}\left(R_{n}/n\right)=\widehat{E}^{\omega}_{*} \left(\frac{1}{n}\sum_{j\in J_{n}} R_{n,j}\right)
=\frac12\widehat{E}_{*}^{\omega}(|J_n|/n)=\frac12\widehat{E}_{*}^{\omega}(|L_{n-1}|/n),
$$
where we are using the notation of Lemma \ref{qn} ($|J_{n+1}|=L_{n}$).
Hence, our goal is to show that
\begin{equation}\label{as.promised}
\lim_{n\to\infty}\widehat{E}_{*}^{\omega}(|L_n|/n)=q.
\end{equation}
Write
\begin{eqnarray*}
\widehat{E}_{*}^{\omega}(L_n)=&&\!\!\!\!\!\widehat{E}_{*}^{\omega}\left(L_n\mid \frac{L_{n}}{n}\in (q-\epsilon,q+\epsilon)\right)
\widehat{P}_{*}^{\omega}\left(\frac{L_{n}}{n}\in (q-\epsilon,q+\epsilon)\right)
\\  +&&\!\!\!\!\!\widehat{E}_{*}^{\omega}\left(L_n\mid \frac{L_{n}}{n}\not\in (q-\epsilon,q+\epsilon)\right)\widehat{P}_{*}^{\omega}
\left(\frac{L_{n}}{n}\not\in (q-\epsilon,q+\epsilon)\right).
\end{eqnarray*}
Now use Lemma \ref{qn}.
Since the first probability on the righthand side is $1-o(1)$ and the second is $o(1)$, and since $0\le L_n\le n$, we have that
 $$ n(q-\epsilon)(1-o(1))\le \widehat{E}_{*}^{\omega}(L_n)\le (q+\epsilon)n+ o(n),$$
that is,
$$ q-\epsilon-o(1)
\le \widehat{E}_{*}^{\omega}(L_n)/n\le 
q+\epsilon+ o(1).$$
Since $\epsilon$ is arbitrary, we are done. $\qed$

\section{Proof of Theorem \ref{main.thm} --- subcritical case}
Recall the definition of $Q^{\omega}=Q_q^{\omega}$  from the proof of Lemma \ref{qn} and that $K$ is the `total obstacle configuration.' Just like in Subsection \ref{frequency}, let  $T_n$ be the time spent in $K^c$ (vacant sites).

Let $Y$ be a simple random walk on $\mathbb Z^d$ with `soft killing/obstacles' under $Q_q^{\omega}$  and let $\mathcal{E}_{q}^{\omega}$ denote the corresponding expectation. By `soft killing' we  mean that at each vacant site, independently, the particle is killed with probability $1-\mu$.
Let\footnote{As the reader has probably guessed already, $\mathtt{DV}$ refers to `Donsker-Varadhan.'} $$\mathtt{DV}_q(n)=\mathtt{DV}(\omega,q,n,\mu):=\mathcal{E}_{q}^{\omega}(\mu^{\sum_{1}^{n}1_{K^{c}}(Y_{i})})=\mathcal{E}_{q}^{\omega}(\mu^{T_n})$$ be the quenched probability that $Y$  survives up to time $n$.
Hence $q$ plays the role of the `intensity' and $\mu$ plays the role of the `shape function' in this discrete setting.

It is known  that for almost every $\omega$,
\begin{equation}\label{Antal.DV}
\mathtt{DV}_q(n)=\exp\left[-C_{d,q}\cdot \frac{n}{(\log n)^{2/d}} (1+o(1))\right],
\end{equation}
as $n\to\infty$,
and $C_{d,q}>0$ does not depend on $\mu$. 
See formula (0.1) on p.58 of \cite{AntalThesis} for hard obstacles. In fact, the proof  for hard obstacles extends  for soft obstacles. Indeed, it becomes easier, since in the case of soft obstcles one does not have to worry about `percolation effects,' that is that the starting point of the process is perhaps not in an infinite trap-free region. Clearly, the lower estimate for survival among hard obstacles is still valid for soft obstacles; the method of proving the upper estimate is a discretized version of Sznitman's `enlargement of obstacles' in both cases.
(See also \cite{Antal.AOP} for similar results and for the enlargement technique in the discrete setting.)


Returning to our branching process and the event $S_n$, we first show that on a set of full $\mathbb{P}_p$-measure, 
\begin{equation}\label{subcrit.reduces.to.expectation.too}
P^{\omega}(S_n)=\frac{\mathtt{DV}_q(n)}{E^{\omega}(|Z_n|\mid S_n)}.
\end{equation}
The expectation $E^{\omega}|Z_n|$ can in fact be expressed as a functional of a {\it single} particle (this is the `Many-To-One' Lemma for branching random walk): $$E^{\omega}|Z_n|=\mathcal{E}_q^{\omega}\,\left(\mu^{T_n}\right)={\mathtt{DV}_q(n)}.$$
This follows from the fact that for $u_n(x)=u^{\omega}_n(x):=E_x^{\omega}|Z_n|$, one has the recursion $$u_n(x)=\sum_{y\sim x} u_{n-1}(y)(\mathbb{I}_{\{y\in K\}}+\mu\, \mathbb{I}_{\{y\in K^c\}})p(x,y),$$
where $y\sim x$ means that $y$ is a neighbor of $x$, and $p(\cdot,\cdot)$ is the one-step kernel for the walk.
(See also \cite{S15}.) Thus
\begin{equation}
    P^{\omega}(S_n)E^{\omega}(|Z_n|\mid S_n)=E^{\omega}|Z_n|=
    \mathcal{E}_q^{\omega}\,\mu^{T_n},
\end{equation}
proving \eqref{subcrit.reduces.to.expectation.too}. 
Since the denominator on the righthand side of \eqref{subcrit.reduces.to.expectation.too} is at least one,  it follows that
\begin{equation}\label{asbefore}
P^{\omega}(S_n)\le \mathtt{DV}^{\mu}_q(n),
\end{equation}
where we emphasize the dependence on $\mu$.

On the other hand, we claim that 
\begin{equation}\label{mustar}
P^{\omega}(S_n)\ge \mathtt{DV}^{\mu^*}_q(n),
\end{equation}
where $\mu^*:=1-p_0$, and $p_0>0$ is the probability of having zero offspring (i.e. death) for the law $\mathcal{L}$.

The reason \eqref{mustar} is true is the following coupling argument:
Every time the particle is at a vacant site, with probability $1-p_0$, we can pick uniformly randomly (independently from everything else) a `successor,' thus effectively embed a random walk with killing into the branching random walk. Clearly, the probability that we are able to complete this procedure until time $n$ is exactly the probability that a single random walk with soft killing survives, where soft killing means that it is killed with probability $p_0$, independently at each vacant site.

It is also evident that if we are able to complete this procedure until time $n$ then the branching process has survived up to time $n$.

Having \eqref{asbefore} and \eqref{mustar} at our disposal, we can now conclude  the assertion of Theorem \ref{main.thm}(ii), because $\mathtt{DV}^{\mu}_q(n)$ and $\mathtt{DV}^{\mu^*}_q(n)$ both have the asymptotic behavior given in \eqref{Antal.DV}, despite the fact that $\mu>\mu^*$ in general.
$\qed$

\medskip
{\bf Acknowledgments:} The hospitality of Microsoft Research, Redmond and of the University of Washington, Seattle are gratefully acknowledged by J.E.


\begin{thebibliography}{Seattle}

\bibitem{AntalThesis} Antal, P., Trapping problems for the simple random walk, PhD dissertation (1994), ETH, Zurich.

\bibitem{Antal.AOP} Antal, P.,
Enlargement of obstacles for the simple random walk. 
Ann. Probab. {\bf 23} (1995), no. 3, 1061--1101.


\bibitem{AC03} Asselah, A. and Castell, F.,
Large deviations for Brownian motion in a random scenery.  
Probab. Theory Related Fields {\bf 126} (2003), no. 4, 497--527. 

\bibitem{ES} Engl\"ander, J. and Sieben, N., Critical branching random walk in an IID environment. Monte Carlo Methods Appl. {\bf 17} (2011), no. 2, 169--193.

\bibitem{GKS} Gantert, N., K\"onig, W. and Shi, Z.,
Annealed deviations of random walk in random scenery. 
Ann. Inst. H. Poincar\'e Probab. Statist. {\bf 43} (2007), no. 1, 47--76. 

\bibitem{GPP13} Guillotin-Plantard, N., Poisat, J., Quenched central limit theorems for random walks in random scenery.
Stochastic Process. Appl. {\bf 123} (2013), no. 4, 1348--1367.

\bibitem{GPdS14} Guillotin-Plantard, N., Poisat, J., dos Santos, R. S., A quenched functional central limit theorem for planar random walks in random sceneries. Electron. Commun. Probab. {\bf 19} (2014), no. 3.

\bibitem{H} Harris, T. E., The theory of branching processes,
Dover Phoenix Editions, (Corrected reprint of the 1963 original) (2002)

\bibitem{LPP95} Lyons, R., Pemantle, R., and Peres, Y.,
Conceptual Proofs of L Log L Criteria for Mean Behavior of Branching Processes.
Ann.  Probab.
{\bf 23}, (1995) No. 3,  1125--1138.

\bibitem{S15} Shi, Z., Branching Random Walks: \'Ecole d'\'Et\'e de Probabilit\'es de Saint-Flour XLII - 2012,
Lecture Notes in Mathematics,
Springer (2015).

\bibitem{Sz} Sznitman, A-S., Brownian motion, obstacles and random media, Springer Monographs in Mathematics,
Springer (1998).

\end{thebibliography}
\end{document}